\newtheorem{theorem}{Theorem}[section]
\newtheorem{lemma}[theorem]{Lemma}
\newtheorem{definition}[theorem]{Definition}
\newtheorem{corollary}[theorem]{Corollary}
\newtheorem{proposition}[theorem]{Proposition}
\newtheorem{ex}[theorem]{Example}
\def\F{\mathcal{F}}
\def\M{\mathfrak{m}}
\def\N{\mathbb{N}}
\def\K{\mathbbm{k}}
\begin{document}
\title{\bf Betti splitting via componentwise linear ideals}
\author{Davide Bolognini}

\maketitle
\begin{abstract}
\noindent A monomial ideal $I$ admits a Betti splitting $I=J+K$ if the Betti numbers of $I$ can be determined in terms of the Betti numbers of the ideals $J,K$ and $J \cap K. $  Given a monomial ideal $I$, we prove that $I=J+K$ is a Betti splitting of $I$, provided $J$ and $K$ are componentwise linear, generalizing a result of Francisco, H\`a and Van Tuyl. If $I$ has a linear resolution, the converse also holds. We apply this result recursively to the Alexander dual of vertex-decomposable, shellable and constructible simplicial complexes and to determine the graded Betti numbers of the defining ideal of three general fat points in the projective space. 
\let\thefootnote\relax\footnotetext{{\it Key words}: resolution of monomial ideals, componentwise linear ideals, Betti splittings, fat points.\\
{\it AMS Mathematics Subject Classification 2010}: Primary 13D02, 13A02; Secondary 05E40, 05E45}
\end{abstract}

\section{Introduction} 
Our aim is to pursue the spirit of Eliahou and Kervaire \cite{eiker}, Francisco,  H\`a and Van Tuyl \cite{ur} in order to find suitable decomposition of the Betti table of a monomial ideal, possibly available for recursive procedures.

Let $\K$ be a field and let $I \subseteq R=\K[x_1,...,x_n]$ be a monomial ideal. Consider $J,K \subseteq I$ monomial ideals such that the set of minimal monomial generators of $I$ is the disjoint union of the sets of minimal monomial generators of $J$ and $K$. We say that $I=J+K$ is a {\em Betti splitting} of $I$ if $$\beta_{i,j}(I)=\beta_{i,j}(J)+\beta_{i,j}(K)+\beta_{i-1,j}(J \cap K), \text{ for all } i,j \in \N,$$ where $\beta_{i,j}(-)$ denotes the graded Betti numbers of a minimal $R$-free graded resolution.

This approach was used for the first time by Eliahou and Kervaire \cite{eiker}, giving an explicit formula for the total Betti numbers of stable ideals. Fatabbi \cite{fat} developed the theory for graded Betti numbers. Many authors wrote papers applying the Eliahou-Kervaire technique to the resolution of special classes of monomial ideals (see e.g.\cite{far}, \cite{fat}, \cite{fran}, \cite{fam}, \cite{moradi}, \cite{anda}, \cite{tuyl}, \cite{tuyl2}, \cite{tito}). Francisco,  H\`a and Van Tuyl proved in \cite[Corollary 2.4]{ur} that if $J$ and $K$ have a linear resolution, then $I=J+K$ is a Betti splitting of $I$. In Section 3 we generalize this result, proving that $I=J+K$ is a Betti splitting of $I$, provided $J$ and $K$ are componentwise linear (Theorem \ref{my}). If $I$ has a linear resolution, the converse also holds (Proposition \ref{prequel}).


Componentwise linear ideals have been extensively studied (see e.g.\cite{fran}, \cite{fam}, \cite{hibi},\cite{hibi2}).  Stable ideals, ideals with linear quotients and ideals with linear resolution are examples of componentwise linear ideals.

In Section 4 we apply the theory of Betti splittings to the Alexander dual ideal $I_{\Delta}^*$ of a simplicial complex $\Delta$. By the Stanley-Reisner correspondence, squarefree monomial ideals correspond to simplicial complexes (see e.g.\cite{hibi}). This is an important bridge between Commutative Algebra and Combinatorics. In particular, by Hochster's formula \cite{hoch}, the graded Betti numbers of $I_{\Delta}^*$ reflect geometric and topological information on $\Delta$. 



As a consequence of Theorem \ref{my}, we recover a result due to Moradi and Kosh-Ahang \cite{moradi}, proving that the Alexander dual of a vertex-decomposable simplicial complex admits a particular kind of splitting, the so-called $x_i$-splitting (Corollary \ref{vd}). In Corollary \ref{sh} we prove a Betti splitting result for shellable and constructible simplicial complexes, showing that in general they do not admit $x_i$-splitting (Example \ref{shnobordo}).

A further application is an extension of a result proved by Valla in \cite{tito}. By using a recursive approach we can compute explicitly the graded Betti numbers of the defining ideal of three general fat points in the projective space (Corollary \ref{finalcor} and Corollary \ref{finalcor2}).\\

{\bf Acknowledgements.}
The results presented in this paper are part of my PhD thesis and I would like to thank my advisors, Maria Evelina Rossi, Leila De Floriani and Emanuela De Negri for their encouragements and their constant care about my work. The author is grateful to Hop Nguyen for pointing out \cite[Lemma 2.8(ii)]{hop}. It gives substantial improvements to a previous version of Theorem \ref{my}. Thanks to Aldo Conca and Matteo Varbaro for helpful discussions and suggestions. 

\section{Preliminaries}\label{pre}
Let $\K$ be a field, $R=\K[x_1,...,x_n],$ $\M$ the maximal homogeneous ideal of $R,$ $I \subseteq R$ an homogeneous ideal. Denote by $\beta_{ij}(I)=\dim_{\K}\mathrm{Tor}^R_i(I,\K)_j$ the {\em graded Betti numbers} of $I$. In the following we omit the superscript $R$. Given a monomial ideal $I$, let $G(I)$ be the minimal system of monomial generators of $I$ and $\mathrm{indeg}(I)$ be the lowest degree of a generator in $G(I).$ 


\begin{definition} \em{(Francisco, H\`a, Van Tuyl,\cite{ur})}
Let $I$, $J$ and $K$ be monomial ideals such that $I=J+K$ and $G(I)$ is the disjoint union of $G(J)$ and $G(K)$. Then $J+K$ is a {\em Betti splitting} of $I$ if $$\beta_{i,j}(I)=\beta_{i,j}(J)+\beta_{i,j}(K)+\beta_{i-1,j}(J \cap K), \text{ for all } i,j \in \N.$$
\end{definition} 

The previous definition can be given in terms of the vanishing of some $\mathrm{Tor}$ modules maps, as stated in the following result.


\begin{proposition}\label{formal}{\em (Francisco, H\`a and Van Tuyl, \cite[Proposition 2.1]{ur})}
Let $I$, $J$ and $K$ be monomial ideals such that $I=J+K$ and $G(I)$ is the disjoint union of $G(J)$ and $G(K)$. Consider the short exact sequence $$0 \rightarrow J \cap K \rightarrow J \oplus K \rightarrow I \rightarrow 0$$
and the corresponding long exact sequence of $\mathrm{Tor}$ modules:

\begin{center}
$\cdots \rightarrow \mathrm{Tor}_{i+1}(I,\K)_{j} \rightarrow \mathrm{Tor}_i(J \cap K,\K)_{j} \stackrel{\phi_{i,j}}\longrightarrow \mathrm{Tor}_i(J,\K)_{j} \oplus \mathrm{Tor}_i(K,\K)_{j} \rightarrow \mathrm{Tor}_i(I,\K)_{j} \rightarrow \cdots$
\end{center}

Then the following are equivalent:
\begin{itemize}
 \item[$(i)$] $I=J+K$ is a Betti splitting; 
 \item[$(ii)$] $\phi_{i,j}=0$, for all $i,j \in \N$.
\end{itemize}
\end{proposition}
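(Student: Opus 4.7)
The plan is to derive both implications from a single numerical identity obtained by chasing the displayed long exact sequence. Denote by $\psi_{i,j}\colon \mathrm{Tor}_i(J,\K)_j \oplus \mathrm{Tor}_i(K,\K)_j \to \mathrm{Tor}_i(I,\K)_j$ and $\delta_{i,j}\colon \mathrm{Tor}_i(I,\K)_j \to \mathrm{Tor}_{i-1}(J \cap K,\K)_j$ the other two maps at position $i$. By exactness, $\ker \psi_{i,j} = \mathrm{image}\, \phi_{i,j}$ and $\mathrm{image}\, \delta_{i,j} = \ker \phi_{i-1,j}$; applying rank--nullity to both $\psi_{i,j}$ and $\delta_{i,j}$ yields
$$\beta_{i,j}(I) = \beta_{i,j}(J) + \beta_{i,j}(K) - \dim_{\K} \mathrm{image}\, \phi_{i,j} + \dim_{\K} \ker \phi_{i-1,j}$$
for all $i,j \in \N$.

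For the direction $(ii) \Rightarrow (i)$, if every $\phi_{i,j}$ vanishes then the first correction disappears and $\dim_{\K} \ker \phi_{i-1,j} = \beta_{i-1,j}(J \cap K)$, so the identity collapses to the Betti splitting formula. For $(i) \Rightarrow (ii)$, substituting the Betti splitting identity into the displayed relation gives $\beta_{i-1,j}(J \cap K) = -\dim_{\K} \mathrm{image}\, \phi_{i,j} + \dim_{\K} \ker \phi_{i-1,j}$, and replacing $\beta_{i-1,j}(J \cap K)$ by $\dim_{\K} \ker \phi_{i-1,j} + \dim_{\K} \mathrm{image}\, \phi_{i-1,j}$ produces
$$\dim_{\K} \mathrm{image}\, \phi_{i,j} + \dim_{\K} \mathrm{image}\, \phi_{i-1,j} = 0.$$
Since both summands are non-negative, both must vanish for every $i,j$, and therefore $\phi_{i,j}$ is identically zero.

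The only subtlety is keeping the internal degree $j$ fixed while tracking the shift in the homological index $i$ on the $J \cap K$ term; beyond that, the argument is a formal dimension count in an exact sequence, so I do not anticipate a genuine obstacle. A cleaner packaging would be to observe that $\phi_{i,j} = 0$ for all $i,j$ is equivalent to the long exact sequence breaking into the short exact sequences $0 \to \mathrm{Tor}_i(J,\K)_j \oplus \mathrm{Tor}_i(K,\K)_j \to \mathrm{Tor}_i(I,\K)_j \to \mathrm{Tor}_{i-1}(J \cap K,\K)_j \to 0$, whose additivity of dimensions is the Betti splitting condition; but for $(i) \Rightarrow (ii)$ one still needs the cancellation argument above to rule out the possibility of nonzero $\phi_{i,j}$ and nonzero $\phi_{i-1,j}$ simultaneously inflating and deflating $\beta_{i,j}(I)$.
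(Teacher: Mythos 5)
Your proof is correct and takes the expected route: it is a straightforward dimension chase in the long exact sequence of Tor. The paper only cites this result from Francisco, H\`a and Van Tuyl without reproving it, and the argument in their paper is the same rank--nullity bookkeeping followed by the observation that, under the Betti splitting identity, the sum of two non-negative quantities $\dim_{\K}\mathrm{image}\,\phi_{i,j}+\dim_{\K}\mathrm{image}\,\phi_{i-1,j}$ is forced to vanish. One small remark: the non-negativity cancellation is exactly what makes $(i)\Rightarrow(ii)$ work without any induction on $i$; each fixed pair $(i,j)$ already delivers $\phi_{i,j}=0$, so the proof is complete as written.
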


If $I$ is generated in degree $d$ we say that $I$ has a {\em $d$-linear resolution} if $\beta_{i,i+j}(I)=0$ for every $i,j \in \N$ and $j \neq d$. When the context is clear, we simply write that $I$ has a linear resolution. The {\em Castelnuovo-Mumford regularity} of $I$ is defined by $\mathrm{reg}(I)=max\{j-i|\beta_{i,j}(I) \neq 0\}.$ An ideal $I$ generated in degree $d$ has a $d$-linear resolution if and only if $\mathrm{reg}(I)=d$.
\vskip2mm
Let $I,J$ and $K$ be monomial ideals, with $I=J+K$ and $G(I)$ is the disjoint union of $G(J)$ and $G(K)$. Francisco,  H\`a and Van Tuyl proved in \cite[Corollary 2.4]{ur} that if $J$ and $K$ have a linear resolution, then $I=J+K$ is a Betti splitting. In Section 3 we generalize this result assuming $J$ and $K$ componentwise linear.
\vskip1mm
We recall some definitions and results that will be useful later. These hold also in a more general setting, but from now on we assume $R$ to be the standard graded polynomial ring with coefficients in a field $\K$ (see \cite{yen},\cite{hop}, \cite{romer} and \cite{sega} for more details). 
\vskip 1mm
Componentwise linear ideals have been introduced by Herzog and Hibi in \cite{hibi2}. Denote by $I_{<j>}$ the ideal generated by all the homogeneous polynomials of degree $j$ belonging to $I$. In the monomial case, $I_{<j>}$ is simply the ideal generated by all monomials of degree $j$ belonging to $I$. 

\begin{definition}
\em A homogeneous ideal $I \subseteq R$ is called {\em componentwise linear} if $I_{<j>}$ has a linear resolution, for every $j \in \N$. 
\end{definition}
 


Notice that we cannot detect if an ideal is componentwise linear from its graded Betti numbers. In the following example we show an ideal $I$ that is not componentwise linear but with the same graded Betti numbers of a componentwise linear ideal.

\begin{ex}{\em (\cite[Example 5.5]{hsv})}
\em Let $I,J \subseteq \K[x,y,z]$ be the ideals $I=(x^4,x^3y,x^2y^2,$ $x^3z,xy^2z,xyz^2,xy^4,x^2z^3,y^4z)$ and $J=(x^4,x^3y,x^2y^2,xy^3,y^4,x^3z,x^2yz^2,x^2z^3,xy^2z^2)$. Using CoCoA \cite{cocoa}, one can see that $I$ and $J$ have the same graded Betti numbers. The ideal $J$ is stable hence componentwise linear, while $I_{<4>}$ has not a linear resolution, thus $I$ is not componentwise linear. 
\end{ex} 


Linearity defect was introduced by Herzog and Iyengar in \cite{yen} and measures how far a resolution is from being linear. 

Let $M$ be a finitely generated graded $R$-module and $\mathbb{F}$ its minimal graded free resolution over $R$. Let $\mathrm{lin}^R(\mathbb{F})$ be the chain complex obtained by $\mathbb{F}$ replacing by zero each entry of degree greater than one in the matrices of the differentials of $\mathbb{F}$.

\begin{definition}
\em Let $M$ be a finitely generated graded $R$-module $M$. The {\em linearity defect} of $M$ is defined by $$\mathrm{ld}_R(M):=\mathrm{sup}\{i:H_i(\mathrm{lin}^R(\mathbb{F})) \neq 0\},$$ where $H_i(\mathrm{lin}^R(\mathbb{F}))$ denotes the $i$-th homology of the chain complex $\mathrm{lin}^R(\mathbb{F}).$ 
\end{definition}



We close this section by stating a special case of a useful characterization of componentwise linear modules due to R\"{o}mer (see also \cite[Proposition 4.9]{ya}).

\begin{theorem}\label{romerresult}\em(R\"{o}mer,\cite[Theorem 3.2.8]{romer})
\em For any homogeneous ideal $I \subseteq R$, $I$ is componentwise linear if and only if $\mathrm{ld}_R(I)=0$.
\end{theorem}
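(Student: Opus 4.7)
The plan is to prove both implications by analyzing the linear part $\mathrm{lin}^R(\mathbb{F})$ of the minimal resolution $\mathbb{F}$ of $I$ in relation to the degree filtration of $I$ by the subideals $I_{<j>}$. Enumerate the distinct generator degrees as $d_1 < \cdots < d_s$ and introduce the truncated subideals $I^{(k)} = \bigl(f \in G(I) : \deg f \leq d_k\bigr)$, giving a chain $0 = I^{(0)} \subset \cdots \subset I^{(s)} = I$ with short exact sequences $0 \to I^{(k-1)} \to I^{(k)} \to Q_k \to 0$ whose quotients $Q_k$ are concentrated in degree $d_k$.

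For the forward direction, assume $I$ is componentwise linear and induct on the number of generator degrees $s$. The base case $s=1$ is immediate: an equigenerated componentwise linear ideal has a linear resolution and hence linearity defect zero. For the inductive step, first verify that each $I^{(k)}$ is again componentwise linear: its degree-$j$ components coincide with $I_{<j>}$ for $j \leq d_k$ and equal $\M^{j - d_k} I_{<d_k>}$ otherwise, both of which admit a linear resolution (using that $I_{<d_k>}$ is equigenerated with linear resolution, and that such ideals remain linearly resolved after multiplication by powers of $\M$). One then invokes the standard compatibility of linearity defect with short exact sequences: when the linear strands of the outer terms match across the connecting maps of $\mathrm{Tor}$ --- a matching that is forced here by the degree separation between $d_{k-1}$ and $d_k$ --- the middle term also satisfies $\mathrm{ld}_R = 0$.

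For the converse, assume $\mathrm{ld}_R(I)=0$ and fix an internal degree $j$; the goal is to show $I_{<j>}$ has a linear resolution. The strand of $\mathrm{lin}^R(\mathbb{F})$ in internal degree $j$ is an acyclic complex of free modules whose zeroth term is the part of $\mathbb{F}_0$ generated in degree $j$, and whose zeroth homology equals $I_{<j>}$: indeed every polynomial of degree $j$ in $I$ is an $R$-linear combination of generators of degree at most $j$, and only those of degree exactly $j$ contribute nontrivially to the degree-$j$ part of $\mathbb{F}_0$. The extracted complex is thus a linear free resolution of $I_{<j>}$.

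The main obstacle, in my view, is the reverse direction: verifying that the degree-$j$ slice of $\mathrm{lin}^R(\mathbb{F})$ is genuinely the \emph{minimal} free resolution of $I_{<j>}$, and ruling out hidden nonlinear first syzygies of $I_{<j>}$ that are invisible to $\mathrm{lin}^R(\mathbb{F})$. Closing this gap requires a careful analysis of the Koszul filtration on $\mathbb{F}$ and the interplay between the generators coming from $G(I) \cap R_j$ and those arising from $\M \cdot I_{<j-1>}$; this is where R\"{o}mer's original argument concentrates its technical weight.
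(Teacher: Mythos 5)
The paper states this result without proof, attributing it to R\"{o}mer's thesis \cite[Theorem 3.2.8]{romer}, so there is no in-paper argument to compare against; I can only assess your proposal on its own terms, and it contains a genuine error.

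The central step of your reverse direction --- the identification $H_0(L^{(j)}) = I_{<j>}$, where $L^{(j)}$ denotes the degree-$j$ linear strand of $\mathrm{lin}^R(\mathbb{F})$ --- is false. The zeroth term of $L^{(j)}$ is $R(-j)^{\beta_{0,j}(I)}$, i.e., it sees only the minimal generators of $I$ lying in degree exactly $j$, whereas $I_{<j>}$ is generated by the entire degree-$j$ piece $I_j$, which in general also receives contributions from lower-degree generators multiplied up by $\M$. Concretely, take $I=(x^2,xy,y^3)\subseteq \K[x,y]$, which is stable, hence componentwise linear, hence $\mathrm{ld}_R(I)=0$. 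Here $\beta_{0,3}(I)=1$, so $L^{(3)}_0 = R(-3)$ has rank one; but $I_{<3>} = (x,y)^3$ has four minimal generators and cannot be a quotient of $R(-3)$. So the gap you flagged (minimality, hidden syzygies) is not the real obstruction --- the claimed zeroth homology is simply the wrong module.

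Your forward direction also needs repair: the quotient $Q_k = I^{(k)}/I^{(k-1)}$ is generated in degree $d_k$ but is certainly not concentrated there, and the appeal to a ``standard compatibility of linearity defect with short exact sequences'' is not a theorem in the generality invoked. The known results in this vein (e.g.\ in the work of \c{S}ega and Nguyen) require precise vanishing hypotheses on the connecting maps in $\mathrm{Tor}$, which is exactly the nontrivial content; asserting that ``degree separation forces'' this is where the argument would need to be made rigorous. A correct proof (as in R\"{o}mer's thesis, or in Yanagawa's treatment) relates the acyclicity of $\mathrm{lin}^R(\mathbb{F})$ to the regularity of the truncations $I_{\geq k}$ and to the modules $I_{<j>}$ through a more careful filtration argument, not through the direct identification of strand homology with $I_{<j>}$.
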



\section{Results}\label{main}
Let $I,J$ and $K$ be monomial ideals, with $I=J+K$ and $G(I)$ is the disjoint union of $G(J)$ and $G(K)$. Francisco,  H\`a and Van Tuyl proved in \cite[Corollary 2.4]{ur} that if $J$ and $K$ have a linear resolution, then $I=J+K$ is a Betti splitting. Provided $I$ with a linear resolution, the converse also holds, as stated in the following result.

\begin{proposition}\label{prequel}
Let $d$ be a positive integer, $I$ be a monomial ideal with a $d$-linear resolution, $J,K \neq 0$ monomial ideals such that $I=J+K$, $G(I)=G(J) \cup G(K)$ and $G(J) \cap G(K)=\emptyset$. Then the following facts are equivalent:
\begin{enumerate}
 \item[$(i)$] $I=J+K$ is a Betti splitting of $I$;
 \item[$(ii)$] $J$ and $K$ have $d$-linear resolutions.
\end{enumerate}
If this is the case, then $J \cap K$ has a $(d+1)$-linear resolution.
\end{proposition}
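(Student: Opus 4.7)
The direction (ii) $\Rightarrow$ (i) is exactly \cite[Corollary 2.4]{ur}, which has already been stated in the excerpt, so only (i) $\Rightarrow$ (ii) and the final assertion require work. My plan is to exploit the vanishing imposed by the $d$-linearity of $I$ on the Betti splitting formula, together with nonnegativity of graded Betti numbers.

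First I would note that because $I$ has a $d$-linear resolution, $I$ is generated in degree $d$, hence every element of $G(I) = G(J) \sqcup G(K)$ has degree $d$. In particular $J$ and $K$ are both generated in degree $d$, and it remains only to show that their resolutions are linear. Assuming $(i)$, the defining identity
\[
\beta_{i,j}(I) = \beta_{i,j}(J) + \beta_{i,j}(K) + \beta_{i-1,j}(J\cap K)
\]
combined with $\beta_{i,j}(I) = 0$ for $j \neq i+d$ (the $d$-linearity of $I$) forces, by nonnegativity, each summand on the right to vanish whenever $j \neq i+d$. Thus $\beta_{i,j}(J) = \beta_{i,j}(K) = 0$ for $j \neq i+d$, which together with generation in degree $d$ shows that $J$ and $K$ have $d$-linear resolutions.

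For the last sentence, the same vanishing yields $\beta_{i-1,j}(J\cap K) = 0$ for $j \neq i+d$, i.e.\ $\beta_{m,j}(J\cap K) = 0$ whenever $j \neq m + (d+1)$. To convert this into a genuine $(d+1)$-linear resolution I need $J\cap K$ to be generated in degree $d+1$ and to be nonzero. Nonvanishing is immediate: picking any $u \in G(J)$ and $v \in G(K)$ (which exist by the hypothesis $J,K \neq 0$), the monomial $\mathrm{lcm}(u,v)$ lies in $J \cap K$. Applying the vanishing above with $m=0$ shows that all minimal generators of $J \cap K$ sit in degree $d+1$, so $J\cap K$ is generated in degree $d+1$ and all higher Betti numbers lie on the linear strand, giving the asserted $(d+1)$-linear resolution.

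There is essentially no hard step here: once the Betti splitting identity is available, the argument is purely bookkeeping with the inequality $\beta_{i,j}(-) \geq 0$. The only subtlety, minor as it is, is to explicitly check that $J\cap K \neq 0$, so that the vanishing of $\beta_{0,j}(J\cap K)$ away from $j = d+1$ translates into a genuine statement about the degrees of generators.
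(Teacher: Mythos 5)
Your proof is correct, and for the equivalence $(i)\Leftrightarrow(ii)$ it is essentially identical to the paper's: $(ii)\Rightarrow(i)$ is \cite[Corollary 2.4]{ur}, and $(i)\Rightarrow(ii)$ reads off the vanishing from the Betti splitting identity plus nonnegativity, noting (as you rightly make explicit) that $J$ and $K$ are generated in degree $d$ because $G(J)\sqcup G(K)=G(I)$.

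For the final assertion about $J\cap K$, you take a genuinely different, and slightly more self-contained, route. The paper invokes the regularity bound $\mathrm{reg}(J\cap K)\leq \mathrm{reg}(I)+1=d+1$ from \cite[Corollary 2.2]{ur}, combines it with $\mathrm{indeg}(J\cap K)\geq d+1$, and concludes $\mathrm{reg}(J\cap K)=\mathrm{indeg}(J\cap K)=d+1$. You instead extract the vanishing $\beta_{m,j}(J\cap K)=0$ for $j\neq m+(d+1)$ directly from the Betti splitting identity and nonnegativity, then argue separately that $J\cap K\neq 0$ via the lcm of a generator of $J$ and one of $K$, so that the vanishing of $\beta_{0,j}$ away from $j=d+1$ really does pin down the generating degree. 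Both arguments are correct; yours avoids the extra citation, while the paper's is terser once the regularity corollary is in hand. The underlying fact is the same in both cases, namely that the long exact Tor sequence controlling the splitting also controls $J\cap K$.
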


\begin{proof} Assume $(i)$ holds. Then $\beta_{i,i+j}(I)=\beta_{i,i+j}(J)+\beta_{i,i+j}(K)+\beta_{i-1,i+j}(J \cap K)$ for all $i,j \geq 0$. Let $i \geq 0$. For $j \neq d$ we have $\beta_{i,i+j}(I)=\beta_{i,i+j}(J)=\beta_{i,i+j}(K)=0, $ thus $J$ and $K$ have a $d$-linear resolution. 

Conversely one has that $(ii)$ implies $(i)$, by \cite[Corollary 2.4]{ur}.

By \cite[Corollary 2.2]{ur} we have $\mathrm{reg}(J \cap K) \leq \mathrm{reg}(I)+1=d+1$. Since $\mathrm{indeg}(J \cap K) \geq d+1$, then $\mathrm{reg}(J \cap K) \geq \mathrm{indeg}(J \cap K) \geq d+1$, thus $J \cap K$ has a $(d+1)$-linear resolution.  \end{proof}

Notice that there exist ideals with a linear resolution that do not admit any Betti splitting (see Example \ref{nosplit}).

Now we extend \cite[Corollary 2.4]{ur}, assuming $J$ and $K$ componentwise linear ideals. In the proof we use the following key-lemma.

\begin{lemma}\label{hop1}\em(Nguyen,\cite[Lemma 2.8(ii)]{hop})
\em Let $M \rightarrow P$ be an $R$-linear map between finitely generated $R$-modules. If for some $k \geq \mathrm{ld}_R(P)+1$, the map $\mathrm{Tor}_{k-1}(M,\K) \rightarrow \mathrm{Tor}_{k-1}(P,\K)$ is zero, then the map
$$\mathrm{Tor}_i(M,R/\M^s) \rightarrow \mathrm{Tor}_i(P,R/\M^s)$$ is zero for all $i \geq k$ and all $s \geq 0$.
\end{lemma}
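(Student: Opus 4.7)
The plan is to split the proof into the case $N=\K$ (i.e., $s=1$), then bootstrap to general $s$ by induction on $s$, exploiting the filtration of $R/\M^s$ by powers of $\M$.

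For the inductive step from $s$ to $s+1$, I would apply $\mathrm{Tor}_\bullet(M,-)$ and $\mathrm{Tor}_\bullet(P,-)$ to the short exact sequence
$$0 \to \M^s/\M^{s+1} \to R/\M^{s+1} \to R/\M^s \to 0.$$
Since $\M^s/\M^{s+1}$ is a direct sum of shifted copies of $\K$, the case $s=1$ gives the vanishing of $\phi_*$ on $\mathrm{Tor}_i(-, \M^s/\M^{s+1})$ for every $i \geq k$. Combined with the inductive vanishing on $\mathrm{Tor}_i(-, R/\M^s)$, a diagram chase in the commutative ladder of long exact sequences---crucially using the vanishing at both homological degrees $i$ and $i+1$ for the outer terms---should yield the vanishing on $\mathrm{Tor}_i(-, R/\M^{s+1})$ for $i \geq k$.

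The heart of the argument is the base case $s=1$. The plan is to lift $\phi\colon M \to P$ to a chain map $\tilde\phi_\bullet \colon F^M_\bullet \to F^P_\bullet$ between minimal graded free resolutions. The hypothesis that $\mathrm{Tor}_{k-1}(M,\K) \to \mathrm{Tor}_{k-1}(P,\K)$ vanishes is equivalent to the inclusion $\tilde\phi_{k-1}(F^M_{k-1}) \subseteq \M F^P_{k-1}$. I would then prove by induction on $i \geq k$ that $\tilde\phi_i(F^M_i) \subseteq \M F^P_i$, which is exactly the vanishing of the Tor map at level $i$. Given the inductive hypothesis at level $i-1$, the chain-map relation together with $\partial^M_i(F^M_i) \subseteq \M F^M_{i-1}$ forces $\partial^P_i(\tilde\phi_i(F^M_i)) \subseteq \M^2 F^P_{i-1}$, so the residue of $\tilde\phi_i$ modulo $\M F^P_i$ is annihilated by $\mathrm{lin}(\partial^P_i)$. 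The hypothesis $k-1 \geq \mathrm{ld}_R(P)$ ensures $H_i(\mathrm{lin}^R(F^P)) = 0$ for all $i \geq k$, so this residue must come from the image of $\mathrm{lin}(\partial^P_{i+1})$. Because boundaries in the linear part sit one $\M$-adic filtration layer deeper than the cycles where the residue lives, a careful tracking of the internal grading forces the residue itself to vanish.

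The main obstacle is exactly this base case: extracting the strong conclusion $\tilde\phi_i(F^M_i) \subseteq \M F^P_i$ from the weaker datum $\partial^P_i(\tilde\phi_i(F^M_i)) \subseteq \M^2 F^P_{i-1}$ requires a careful analysis of the $\M$-adic filtration on $F^P_\bullet$ and the acyclicity of $\mathrm{lin}^R(F^P)$ in homological degrees beyond $\mathrm{ld}_R(P)$. A secondary subtlety is the diagram chase in the induction on $s$: vanishing at the two outer terms of the long exact sequence does not immediately force vanishing at the middle, so one must invoke the vanishing at two consecutive homological degrees simultaneously in order to close the chase.
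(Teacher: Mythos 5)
This lemma is imported verbatim from Nguyen \cite[Lemma 2.8(ii)]{hop}; the present paper does not reprove it, so there is no internal proof to compare your proposal against. Judged on its own merits, your base case $s=1$ is essentially right: lifting $\phi$ to a chain map $\tilde\phi_\bullet$ of minimal free resolutions, the hypothesis is $\tilde\phi_{k-1}(F^M_{k-1})\subseteq\M F^P_{k-1}$; by induction on $i\geq k$, minimality gives $\partial^P_i\tilde\phi_i(F^M_i)\subseteq\M^2F^P_{i-1}$, so the class of $\tilde\phi_i$ in $F^P_i/\M F^P_i$ is a cycle of $\mathrm{lin}^R(F^P)$ in homological degree $i$; acyclicity for $i>\mathrm{ld}_R(P)$ forces it to be a boundary, and the bigraded piece of $\mathrm{lin}^R(F^P)_{i+1}$ that could supply a preimage is zero, so $\tilde\phi_i(F^M_i)\subseteq\M F^P_i$.

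The induction on $s$, however, has a genuine gap and your caveat about ``two consecutive degrees'' does not close it. Writing $A,B,C$ for $\mathrm{Tor}_\bullet(-,R/\M^s)$, $\mathrm{Tor}_\bullet(-,\M^s/\M^{s+1})$, $\mathrm{Tor}_\bullet(-,R/\M^{s+1})$, the available vanishing on $A_i,A_{i+1},B_i$ only tells you that $C^M_i\to C^P_i$ sends $\ker(C^M_i\to A^M_i)$ to zero and has image inside $\ker(C^P_i\to A^P_i)$. That confines the map to $\mathrm{Hom}(\mathrm{Im}\,\beta_M,\mathrm{Im}\,\alpha_P)$, a ``mixed'' term which the ladder does not constrain to be zero: it is easy to produce four-term exact ladders in which both outer comparison maps vanish while the middle one does not. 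Adding the vanishing at $A_{i+1}$ gives no new information, since it appears in the chase only through the connecting map and does not touch the obstruction.

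The correct repair is to avoid inducting on $s$ and instead run the $\M$-adic acyclicity argument once more, this time for all $s$ at once. You already have $\tilde\phi_j(F^M_j)\subseteq\M F^P_j$ for all $j\geq k-1$. Given $i\geq k$, $s\geq 1$ and $z\in F^M_i$ with $\partial^M_iz\in\M^sF^M_{i-1}$, set $x=\tilde\phi_i(z)$; then $x\in\M F^P_i$ and $\partial^P_ix=\tilde\phi_{i-1}(\partial^M_iz)\in\M^{s+1}F^P_{i-1}$. Repeating the base-case manoeuvre one $\M$-adic layer at a time (whenever $x'\in\M^tF^P_i$ with $\partial^P_ix'\in\M^{t+2}F^P_{i-1}$, its class in $\M^tF^P_i/\M^{t+1}F^P_i$ is a cycle of $\mathrm{lin}^R(F^P)$, hence a boundary $\mathrm{lin}(\partial^P_{i+1})(\overline{y})$ with $y\in\M^{t-1}F^P_{i+1}$, so $x'-\partial^P_{i+1}y\in\M^{t+1}F^P_i$ while $\partial^P_i(x'-\partial^P_{i+1}y)=\partial^P_ix'$) and iterating $s-1$ times yields $x\in\partial^P_{i+1}(F^P_{i+1})+\M^sF^P_i$. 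Hence $\tilde\phi_i(z)$ is a boundary of $F^P\otimes R/\M^s$ and the map on $\mathrm{Tor}_i(-,R/\M^s)$ vanishes, for every $i\geq k$ and $s\geq 0$, in a single pass.
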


\begin{theorem}\label{my}
Let $I,J$ and $K$ be monomial ideals such that $I=J+K$ and $G(I)$ is the disjoint union of $G(J)$ and $G(K)$. If $J$ and $K$ are componentwise linear, then $I=J+K$ is a Betti splitting of $I$.
\end{theorem}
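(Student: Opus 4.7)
The plan is to verify, via Proposition~\ref{formal}, that every connecting map
\[
\phi_{i,j}\colon \mathrm{Tor}_i(J \cap K, \K)_j \to \mathrm{Tor}_i(J, \K)_j \oplus \mathrm{Tor}_i(K, \K)_j
\]
vanishes; it suffices to show that both of its components---induced by the inclusions $J \cap K \hookrightarrow J$ and $J \cap K \hookrightarrow K$---are zero in every bidegree.

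First I would dispose of $i=0$ using only the combinatorial hypothesis $G(I) = G(J) \sqcup G(K)$. The key observation is that no minimal generator of $J \cap K$ is a minimal generator of either $J$ or $K$: if $m \in G(J \cap K) \cap G(J)$, then $m \in K$ forces some $v \in G(K)$ to divide $m$, and since $G(J) \cap G(K) = \emptyset$ the division must be strict, placing $m$ in $\M K \subseteq \M I$ and contradicting $m \in G(J) \subseteq G(I)$; the symmetric statement for $G(K)$ is analogous. Hence every minimal generator of $J \cap K$ lies in $\M J \cap \M K$, so its images in $J/\M J$ and $K/\M K$ both vanish, giving $\phi_{0,j} = 0$ for all $j$.

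To bootstrap to all $i \geq 1$, I would apply Lemma~\ref{hop1} to the inclusion $J \cap K \hookrightarrow J$. By Theorem~\ref{romerresult}, componentwise linearity of $J$ gives $\mathrm{ld}_R(J) = 0$, so $k=1$ satisfies $k \geq \mathrm{ld}_R(J)+1$. The required hypothesis---that $\mathrm{Tor}_0(J \cap K, \K) \to \mathrm{Tor}_0(J, \K)$ is zero---is exactly the first component of the previously established $\phi_{0,j}=0$. Specializing the conclusion of Lemma~\ref{hop1} to $s=1$, so that $R/\M^{s} = \K$, then forces the first component of $\phi_{i,j}$ to vanish for every $i \geq 1$ and every $j$. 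Running the same argument with $K$ in place of $J$ kills the second component, so $\phi_{i,j} = 0$ for all $i,j$, and Proposition~\ref{formal} delivers the Betti splitting. The main obstacle is recognising that Lemma~\ref{hop1} is tailor-made to bootstrap a degree-zero vanishing into all homological degrees at once; the seemingly stronger $R/\M^{s}$-coefficients in its conclusion are a red herring, since only $s=1$ is needed, and once the purely combinatorial input $G(J \cap K) \subseteq \M J \cap \M K$ is isolated the homological content reduces to one invocation of Nguyen's lemma for each side.
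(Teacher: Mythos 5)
Your proof is correct and follows essentially the same route as the paper: establish $G(J\cap K)\subseteq \M J\cap \M K$ to kill $\phi_0$, then invoke Lemma~\ref{hop1} with $k=s=1$ (using $\mathrm{ld}_R=0$ from Theorem~\ref{romerresult}) to propagate the vanishing to all higher homological degrees. The only cosmetic difference is that you apply Nguyen's lemma to each inclusion $J\cap K\hookrightarrow J$ and $J\cap K\hookrightarrow K$ separately, whereas the paper applies it once to $J\cap K\to J\oplus K$ after noting $\mathrm{ld}_R(J\oplus K)=0$.
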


\noindent {\em Proof.} By Proposition \ref{formal}, it suffices to prove that all the maps $$\mathrm{Tor}_i(J \cap K,\K) \stackrel{\phi_{i}}\longrightarrow \mathrm{Tor}_i(J \oplus K,\K)=\mathrm{Tor}_i(J,\K) \oplus \mathrm{Tor}_i(K,\K)$$ are zero for all $i \geq 0$. Since $J$ and $K$ are componentwise linear, by Theorem \ref{romerresult} we have $\mathrm{ld}_R(J)=\mathrm{ld}_R(K)=0$. Then $\mathrm{ld}_R(J \oplus K)=0$. Since $G(I)$ is the disjoint union of $G(J)$ and $G(K)$, hence $J \cap K \subseteq \M J$ and $J \cap K \subseteq \M K$. Then the map $$\mathrm{Tor}_0(J \cap K,\K) \stackrel{\phi_{0}}\longrightarrow \mathrm{Tor}_0(J \oplus K,\K)$$ is zero. By Lemma \ref{hop1} with $k=s=1$ the result follows. \hfill $\qed$\\


In view of Proposition \ref{prequel}, the assumptions on $J$ and $K$ in Theorem \ref{my} cannot be weakened in general, without further assumptions.\\



Clearly the converse of Theorem \ref{my} does not hold in general, as shown in the following example.

\begin{ex}
\em Let $I \subseteq \K[x_1,x_2,x_3,x_4,x_5,x_6]$ be the monomial ideal defined by $$I=(x_4x_5x_6,x_1x_2x_6,x_1x_3x_4).$$ Define $J=(x_4x_5x_6)$ and $K=(x_1x_2x_6,x_1x_3x_4)$. It is easy to check that $I=J+K$ is a Betti splitting of $I$. In fact the graded Betti numbers of $I,J,K$ and $J \cap K$ are given by

\begin{center}
$0 \rightarrow R(-6) \rightarrow R(-5)^3 \rightarrow R(-3)^3 \rightarrow I.$\\
$0 \rightarrow R(-3) \rightarrow J.$\\
$0 \rightarrow R(-5) \rightarrow R(-3)^2 \rightarrow K.$\\
$0 \rightarrow R(-6) \rightarrow R(-5)^2 \rightarrow J \cap K.$\\
\end{center}

Nevertheless $K$ is not componentwise linear, since $K$ is generated in one degree and has no linear resolution.
\end{ex}

Several examples and Proposition \ref{prequel} suggest the following question.\\

{\bf Question}: Assume $I$ componentwise linear. Does the converse of Theorem \ref{my} hold?

\section{Betti splitting for simplicial complexes}
In this section we present some applications of Theorem \ref{my} to simplicial complexes. For more definitions about simplicial complexes, their properties and the Stanley-Reisner correspondence we refer to \cite[Chapter 1]{hibi} and \cite[Chapter 3]{jonsson} and references over there.

\begin{definition}
\em An {\em abstract simplicial complex} $\Delta$ on $n$ vertices is a collection of subsets of $\{1,\dots,n\}$, called {\em faces}, such that if $F \in \Delta$, $G \subseteq F$, then $G \in \Delta$.
\end{definition} 

Denote a face $A=\{i_1,\dots,i_q\}$ by $A=[i_1,\dots,i_q]$, with $i_1<\dots<i_q$. A {\em facet} is a maximal face of $\Delta$ with respect to the inclusion of sets. Denote by $\F(\Delta)$ the collection of facets of $\Delta$. A simplicial complex is called {\em pure} if all the facets of $\Delta$ have the same cardinality.
\vskip 2mm
The {\em Alexander dual ideal} $I_{\Delta}^*$ of $\Delta$ is defined by 
\vskip 1.5mm
\begin{center}
$I_{\Delta}^*=(x_{\overline{F}}$ $|$ $F \in \F(\Delta))$, where $\overline{F}:=\{1,\dots,n\} \setminus F$ and $x_{\overline{F}}=\prod_{i \in \overline{F}} x_i$.
\end{center}
\vskip 1.5mm
A decomposition $\Delta=\Delta_1 \cup \Delta_2$, such that $\mathcal{F}(\Delta)$ is the disjoint union of $\mathcal{F}(\Delta_1)$ and $\F(\Delta_2)$ induces a decomposition $I_{\Delta}^*=I_{\Delta_1}^*+I_{\Delta_2}^*$. We call $\Delta=\Delta_1 \cup \Delta_2$ a {\em Betti splitting} of $\Delta$ if $I_{\Delta}^*=I_{\Delta_1}^*+I_{\Delta_2}^*$ is a Betti splitting of $I_{\Delta}^*$. Note that all the Alexander dual ideals involved are computed with respect to the vertices of $\Delta$.
\vskip 2mm
In the following diagram (see e.g.\cite{jonsson}) we recall the hierarchy of some properties of (possibly non-pure) simplicial complexes.
\begin{equation}\label{diag}\tag{4.1}
\text{vertex decomposable} \rightarrow \text{shellable} \rightarrow \text{constructible} \rightarrow \text{sequentially Cohen-Macaulay}.
\end{equation}

It can be proved (\cite{herwel}, see also \cite[Theorem 8.2.20]{hibi}) that a simplicial complex $\Delta$ is sequentially Cohen-Macaulay if and only if $I_{\Delta}^*$ is componentwise linear. 

All these three properties are defined recursively and for this reason we can apply Theorem \ref{my} to $I_{\Delta}^*$.

In \cite[Theorem 2.3]{ur} Francisco, H\`a and Van Tuyl give conditions on $J,K$ and $J \cap K$ forcing $I=J+K$ to be a Betti splitting of $I$. The splitting given in Theorem \ref{my} is not a consequence of \cite[Theorem 2.3]{ur}, as shown in the following example.

\begin{ex} 
\em Let $\K$ be a field of characteristic zero and let $I_{\Delta}^* \subseteq \K[x_1,...,x_{12}]$ be the Alexander dual ideal of the simplicial complex $\Delta$ whose set of facets:
\vskip 1.5mm
\begin{center}$\F(\Delta)=\{[1, 2, 3, 4], [1, 2, 3, 12],[3, 4, 6], [3, 4, 5], [4, 5, 6], [3, 5, 6],$\\ $[5, 6, 7],[5, 7, 8],[4, 9],[9, 10],[10, 11],[6, 9], [8, 12]\}.$\end{center}
\vskip 1.5mm
Let $\Delta=\Delta_1 \cup \Delta_2$, with  $\mathcal{F}(\Delta_1)=\{[1, 2, 3, 4],$ $[1, 2, 3, 12],$ $[3, 4, 5], [3, 4, 6], [4, 9]\}$ and $\mathcal{F}(\Delta_2)=\{[3, 5, 6], [4, 5, 6], [5, 6, 7], [5, 7, 8],$ $[6, 9], [8, 12], [10, 11],$ $[9, 10]\}$.

Let $J_i$ be the Alexander dual ideal of $\Delta_i$, for $i=1,2$. Both $\Delta_1$ and $\Delta_2$ are shellable (use {\em Macaulay} \cite{m2}). Then $J_1$ and $J_2$ are componentwise linear. By Theorem \ref{my}, $I_{\Delta}^*=J_1+J_2$ is a Betti splitting of $I$. Nevertheless the assumptions of \cite[Theorem 2.3]{ur} are not satisfied since $\beta_{1,11}(J_1 \cap J_2)>0$ and both $\beta_{1,11}(J_1)$ and $\beta_{1,11}(J_2)$ are not zero.
\end{ex}

In the next result we focus on a special splitting of a monomial ideal $I$. Let $x_i$ be a variable of $R$. Let $J$ be the ideal generated by all monomials of $G(I)$ divided by a $x_i$ and let $K$ be the ideal generated by the remaining monomials of $G(I)$. If $I=J+K$ is a Betti splitting, we call $I=J+K$ a {\em $x_i$-splitting} of $I$.
\vskip 1.5mm
We recover the following known result.

\begin{corollary}\label{vd}\em(\cite[Theorem 2.8, Corollary 2.11]{moradi})
\em If $\Delta$ is a vertex decomposable simplicial complex then there exists $i \in V(\Delta)$ such that $I_{\Delta}^*$ admits $x_i$-splitting. 
\end{corollary}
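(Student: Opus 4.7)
The plan is to exploit vertex decomposability by selecting a shedding vertex $v$ of $\Delta$ and splitting the facet set of $\Delta$ according to whether a facet contains $v$ or not; the resulting decomposition of $I_\Delta^*$ should be an $x_v$-splitting, and Theorem \ref{my} should provide the Betti-splitting property.

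More concretely, I would let $\Delta_1$ be the subcomplex of $\Delta$ generated by the facets containing $v$ (i.e.\ $\mathrm{star}_\Delta(v)$) and $\Delta_2$ the subcomplex generated by the remaining facets. This partitions $\F(\Delta)$. On the ideal side, if $v \in F$ then $v \notin \overline{F}$ and $x_v \nmid x_{\overline{F}}$, while if $v \notin F$ then $x_v \mid x_{\overline{F}}$. Hence the generators of $I_{\Delta_2}^*$ are exactly those generators of $I_\Delta^*$ divisible by $x_v$, so $I_\Delta^* = I_{\Delta_2}^* + I_{\Delta_1}^*$ is already in the shape of an $x_v$-splitting, and it remains only to verify that it is actually a Betti splitting.

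To invoke Theorem \ref{my} I would show that both $\Delta_1$ and $\Delta_2$ are vertex decomposable, so that by the implications in \eqref{diag} together with the Herzog--Reiner--Welker characterisation recalled just before the corollary, their Alexander duals $I_{\Delta_1}^*$ and $I_{\Delta_2}^*$ are componentwise linear. The star $\Delta_1 = \{v\} * \mathrm{lk}_\Delta(v)$ is a cone with apex $v$, and a cone over a vertex-decomposable complex is vertex decomposable; the link $\mathrm{lk}_\Delta(v)$ is vertex decomposable by the definition of shedding vertex. For $\Delta_2$ I would use the shedding condition \emph{no facet of $\mathrm{lk}_\Delta(v)$ is a facet of $\mathrm{del}_\Delta(v)$}: the facets of $\mathrm{lk}_\Delta(v)$ are exactly the sets $F \setminus \{v\}$ for $F \in \F(\Delta)$ with $v \in F$, and the condition forces each such set to be strictly contained in a facet of $\mathrm{del}_\Delta(v)$ not containing $v$. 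Hence every facet of $\mathrm{del}_\Delta(v)$ is a facet of $\Delta$ not containing $v$, which means $\Delta_2 = \mathrm{del}_\Delta(v)$ as simplicial complexes; this complex is vertex decomposable by hypothesis. Theorem \ref{my} then yields the Betti splitting and we are done.

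The main subtlety is the identification $\Delta_2 = \mathrm{del}_\Delta(v)$: \emph{a priori} $\mathrm{del}_\Delta(v)$ can have facets of the form $F \setminus \{v\}$ coming from facets $F$ of $\Delta$ containing $v$, and such sets are \emph{not} facets of $\Delta_2$; the shedding property is precisely what excludes this pathology, so that the componentwise linearity hypothesis of Theorem \ref{my} can be applied to $I_{\Delta_2}^*$. The trivial cases (when $\Delta$ is itself a simplex, so that $I_\Delta^*$ is principal and nothing has to be split, or when the chosen $v$ lies in every facet so that $\Delta_2 = \emptyset$ and one can pass to a shedding vertex of $\mathrm{lk}_\Delta(v)$ instead) can be dispatched separately.
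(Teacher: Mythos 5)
Your argument is correct and essentially the same as the paper's: pick a shedding vertex $v$, partition the generators of $I_\Delta^*$ by divisibility by $x_v$, identify the two pieces with $I_{\mathrm{lk}_\Delta(v)}^*$ and $x_v I_{\mathrm{del}_\Delta(v)}^*$, observe both are componentwise linear because the link and deletion are vertex decomposable hence sequentially Cohen--Macaulay (via diagram \eqref{diag} and Herzog--Reiner--Welker), and apply Theorem~\ref{my}. The only difference is that you re-derive from scratch the decomposition $I_\Delta^* = x_v I_{\mathrm{del}_\Delta(v)}^* + I_{\mathrm{lk}_\Delta(v)}^*$, which the paper simply cites from Moradi--Khosh-Ahang, and you use the cone structure of the star rather than the fact that multiplication by a variable preserves componentwise linearity; these are cosmetic variations, not a different route.
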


\begin{proof}
Since $\Delta$ is vertex decomposable, there exists a vertex $i \in V(\Delta)$ such that $\mathrm{del}_{\Delta}(i)$ and $\mathrm{link}_{\Delta}(i)$ are both vertex decomposable. By \cite[Lemma 2.2]{moradi} we have $I_{\Delta}^*=x_iI_{\mathrm{del}_{\Delta}(i}^*+I_{\mathrm{link}_{\Delta}(i)}^*$. A vertex decomposable simplicial complex is sequentially Cohen-Macaulay by Diagram \ref{diag}. Then $x_iI_{\mathrm{del}_{\Delta}(i)}^*$ and $I_{\mathrm{link}_{\Delta}(i)}^*$ are componentwise linear. The statement follows from Theorem \ref{my}.
\end{proof}

In Corollary \ref{vd}, vertex decomposability cannot be replaced by shellability or constructibility, as it is shown in the next example.

\begin{ex}\label{shnobordo}
\em Consider the simplicial complex $\Delta$ whose set of facets is
\vskip 2mm
$\F(\Delta)=\{[2,3,4], [2,4,7],  [1,2,7], [1,6,7], [2,3,5], [1,2,5], [1,2,6], [2,3,6],$

$ [3,5,6], [5,6,7],[4,5,7],[1,4,5],[1,3,4], [1,3,7], [3,5,7], [4,6,7], [4,6,12],$

$[6,11,12], [6,8,12], [8,9,12],[6,8,9],[6,9,10],[6,10,11],[9,10,11],[8,9,11],$

$[4,11,12],[4,8,11],[4,9,12],[4,9,10],[4,8,10], [8,10,12], [10,11,12]\}.$
\vskip 1.5mm
The given order of the facets of $\Delta$ is indeed a shelling, thus $\Delta$ is shellable (use {\em Macaulay} \cite{m2}). By Diagram \ref{diag}, $I_{\Delta}^*$ is componentwise linear. Since $I_{\Delta}^*$ is generated in degree $9$, $I$ has actually a $9$-linear resolution. Consider the splitting $I_{\Delta}^*=x_iJ+K$, for every $1 \leq i \leq 12$. The resolution of $x_iJ$ is {\em not} linear, for every $1 \leq i \leq 12.$ By Proposition \ref{prequel}, $I$ does not admit $x_i$-splitting.
\end{ex}

Let $\Delta$ be a constructible simplicial complex. Although in general $I_{\Delta}^*$ does not admit any $x_i$-splitting, it admits a Betti splitting, as a consequence of Theorem \ref{my}.

\begin{corollary}\label{sh}
Let $\Delta$ be a constructible simplicial complex. Then $I_{\Delta}^*$ admits Betti splitting.
\end{corollary}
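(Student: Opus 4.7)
The plan is a short induction using the recursive definition of constructibility, the implication chain in Diagram \ref{diag}, and Theorem \ref{my}.

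First I would unwind the definition: either $\Delta$ is a simplex, in which case $I_\Delta^*$ is principal and nothing needs to be proved (it trivially admits a Betti splitting), or $\Delta = \Delta_1 \cup \Delta_2$ with $\Delta_1,\Delta_2$ constructible and $\Delta_1 \cap \Delta_2$ constructible of dimension strictly smaller than $\dim \Delta_1$ and $\dim \Delta_2$. This dimension gap is exactly what forces $\mathcal{F}(\Delta) = \mathcal{F}(\Delta_1) \sqcup \mathcal{F}(\Delta_2)$: any facet $F$ of $\Delta_i$ is still a facet of $\Delta$ (since no face of the other piece could strictly contain $F$ without being recorded in $\Delta_1 \cap \Delta_2$, whose dimension is too small), and for the same reason $F \notin \mathcal{F}(\Delta_{3-i})$. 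Hence the induced decomposition $I_\Delta^* = I_{\Delta_1}^* + I_{\Delta_2}^*$ satisfies the disjointness hypothesis on minimal monomial generators that is required in the definition of a Betti splitting.

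Next I would chain together the stated facts about sequential Cohen--Macaulayness. By Diagram \ref{diag}, both $\Delta_1$ and $\Delta_2$ are sequentially Cohen--Macaulay. By the Herzog--Reiner--Welker equivalence recalled just after Diagram \ref{diag}, this means $I_{\Delta_1}^*$ and $I_{\Delta_2}^*$ are both componentwise linear (recall that all Alexander duals are taken with respect to the common vertex set of $\Delta$, so no ambient-ring issue arises).

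Finally, I would invoke Theorem \ref{my} directly on the decomposition $I_\Delta^* = I_{\Delta_1}^* + I_{\Delta_2}^*$ to conclude that it is a Betti splitting of $I_\Delta^*$. The only step requiring genuine verification is the generator disjointness of the induced decomposition, and this is an immediate consequence of the strict dimension drop built into the definition of constructibility; there is no real obstacle beyond that bookkeeping, since all the analytic content has already been absorbed into Theorem \ref{my}.
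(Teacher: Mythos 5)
Your proof is correct and follows the same route as the paper: decompose $\Delta = \Delta_1 \cup \Delta_2$ via the recursive definition of constructibility, use Diagram \ref{diag} together with the Herzog--Reiner--Welker equivalence to deduce that $I_{\Delta_1}^*$ and $I_{\Delta_2}^*$ are componentwise linear, and apply Theorem \ref{my}. The extra detail you supply on why the facets of $\Delta_1$ and $\Delta_2$ are disjoint, and the explicit treatment of the simplex base case, is bookkeeping that the paper leaves implicit in the phrase ``by definition of constructible complexes.''
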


\begin{proof} By definition of constructible complexes, there exist two constructible simplicial complexes $\Delta_1$ and $\Delta_2$ such that $\Delta=\Delta_1 \cup \Delta_2$ and $\F(\Delta)$ is the disjoint union of $\F(\Delta_1)$ and $\F(\Delta_2)$. Then $I_{\Delta_1}^*$ and $I_{\Delta_2}^*$ are componentwise linear ideals, by Diagram \ref{diag}. By Theorem \ref{my}, $I_{\Delta}^*=I_{\Delta_1}^*+I_{\Delta_2}^*$ is a Betti splitting of $I_{\Delta}^*$. \end{proof} 

In the pure case the previous result is \cite[Corollary 3.4]{anda}. With the same notations in the proof of Corollary \ref{sh}, for shellable complexes a more precise result holds: $\Delta_2$ consists of a single facet, because in this case $I_{\Delta}^*$ has linear quotients (see \cite{bolo}).\\

Notice that Corollary \ref{sh} does not hold for (sequentially) Cohen-Macaulay simplicial complexes, as shown in the following example. We present an ideal $I$ that {\em does not admit Betti splitting at all}, in characteristic different from two. To our knowledge this is the first example in literature of an ideal that does not admit any Betti splitting (see \cite[Example 5.31]{bolo} for a characteristic-free example).

\begin{ex}\label{nosplit}
\em Let $\K$ be a field of $\mathrm{char}(\K) \neq 2$. Let $\Delta$ be the following $6$-vertex triangulation with $10$ facets of the real projective plane, due to Reisner \cite{reisner}.

\begin{figure}[!h]\centering
\includegraphics[width=40mm]{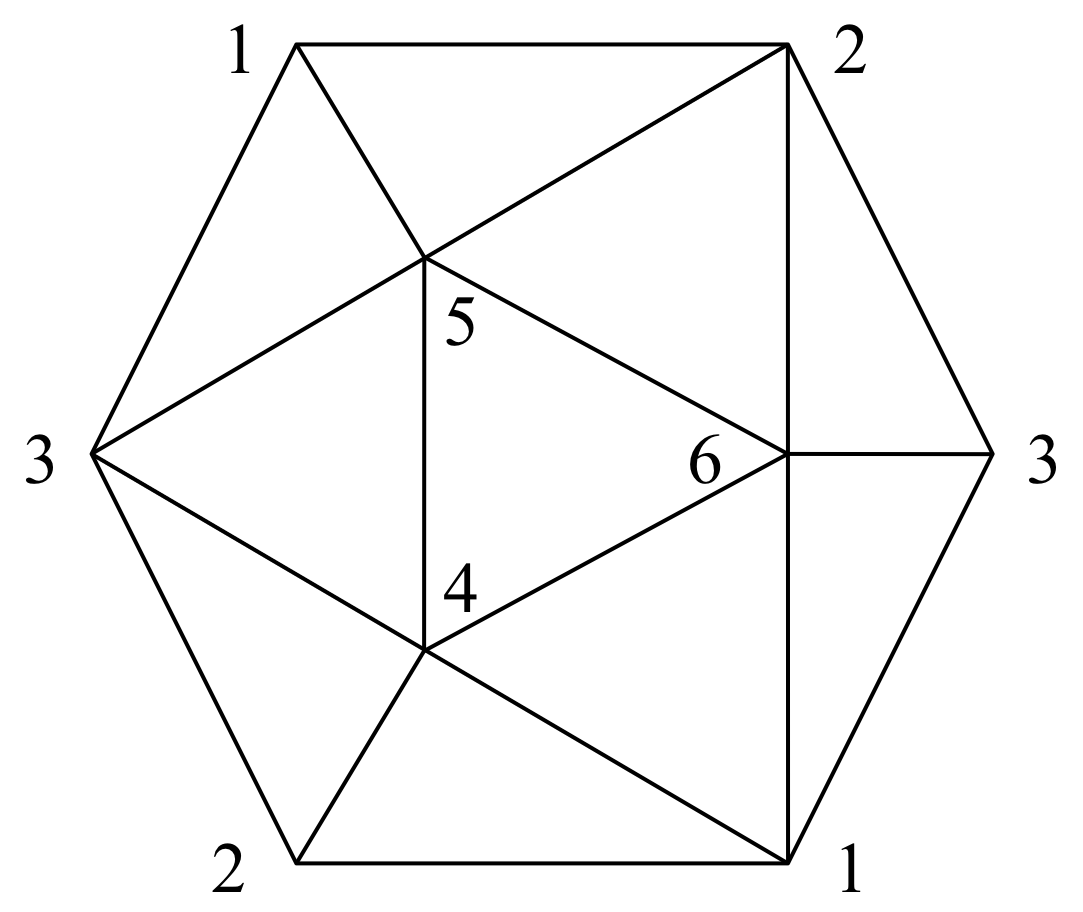}
\caption{A triangulation of real projective plane.}
\end{figure}



The graded Betti numbers of $I_{\Delta}^*$ are given by

$$0 \rightarrow R(-5)^6 \rightarrow R(-4)^{15} \rightarrow R(-3)^{10} \rightarrow I_{\Delta}^*.$$

 Francisco, H\`a and Van Tuyl in \cite{ur} pointed out that $I_{\Delta}^*$ does not admit $x_i$-splitting (in this case the Stanley-Reisner ideal $I_{\Delta}$ and $I_{\Delta}^*$ coincide). Actually $I_{\Delta}^*$ does not admit {\em any} Betti splitting. Using CoCoA \cite{cocoa} and Macaulay \cite{m2}, we checked all the possible $\sum_{i=1}^5\binom{10}{i}=637$ pairs of ideals $J$ and $K$ such that $G(I_{\Delta}^*)$ is the disjoint union of $G(J)$ and $G(K)$. In each case, at least one between $J$ and $K$ has no linear resolution. Then, by Proposition \ref{prequel}, $I_{\Delta}^*=J+K$ is not a Betti splitting.
\end{ex}







\section{The resolution of the ideal of three general fat points in $\mathbb{P}^{n-1}$}\label{fatpoints}
Let $X=\{(P_1,a),(P_2,b),(P_3,c)\}$ be the $0$-dimensional scheme consisting of three general fat points in $\mathbb{P}^{n-1}$, with $n \geq 4$ and $1 \leq a \leq b \leq c$. After  a change of coordinates, we may assume that $P_1=[1:0:...:0]$, $P_2=[0:1:0:...:0]$ and $P_3=[0:0:1:0:...:0]$. Then the defining ideal of $X$ is $$I_{n,a,b,c}=(x_2,...,x_n)^a \cap (x_1,x_3,...,x_n)^b \cap (x_1,x_2,x_4,...,x_n)^c \subseteq R=\K[x_1,...,x_n].$$ In the case of two fat points we denote $I_{n,0,b,c}$ by $I_{n,b,c}$, with $b \leq c$. If $a=b=c$ we denote $I_{n,a,a,a}$ by $I_{n,a}$. 

Francisco proved  in \cite{fran} that the defining ideal of the zero-dimensional schemes of $r \leq n+1$ general fat points in $\mathbb{P}^{n}$ is componentwise linear. In general the ideals $I_{n,a,b,c}$ are not stable (even if $a=b=c=1$). Valla  \cite[Corollary 3.5]{tito} computed the graded Betti numbers of the defining ideal of two general fat points in $\mathbb{P}^n$, $n \geq 2$, by using a Betti splitting argument. We prove a splitting result for $I_{n,a,b,c}$ and, as a consequence, we give a recursive procedure to compute the graded Betti numbers of $I_{n,a,b,c}$ in the case $a \neq c$.

\begin{theorem}\label{splitting}
Let $n \in \N$, $n \geq 4$. Let $X=\{(P_1,a),(P_2,b),(P_3,c)\}$ be the $0$-dimensional scheme defined by three general fat points in $\mathbb{P}^{n-1}$, with $1 \leq a \leq b \leq c$. Assume $c \neq a$. Then $I=I_{n,a,b,c}$ admits $x_1$-splitting.
\end{theorem}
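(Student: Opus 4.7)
The plan is to establish the $x_1$-splitting $I = J + K$ (where $J$ is generated by those minimal generators of $I$ divisible by $x_1$ and $K$ by the rest) as a Betti splitting via Theorem~\ref{my}. Thus it suffices to verify that both $J$ and $K$ are componentwise linear, and in each case I reduce to Francisco's theorem~\cite{fran} on ideals of at most $n+1$ general fat points in $\mathbb{P}^n$.

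A monomial $m$ with $x_1 \nmid m$ lies in $I$ precisely when $\sum_{j\geq 3} \alpha_j(m) \geq b$ and $\alpha_2(m) + \sum_{j\geq 4} \alpha_j(m) \geq c$ (the condition coming from $(x_2,\ldots,x_n)^a$ is automatic since $a \leq b$). Hence $K$ is the extension to $R$ of the defining ideal of the two general fat points $\{(P_2,b),(P_3,c)\}$ in $\mathbb{P}^{n-2}$; this is componentwise linear by Francisco and the property is preserved under polynomial extension (the minimal free resolution tensored up remains minimal with linear part preserved, so Theorem~\ref{romerresult} applies).

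For $J$, a direct colon computation yields
$$(I : x_1) = (x_2,\ldots,x_n)^a \cap (x_1,x_3,\ldots,x_n)^{b-1} \cap (x_1,x_2,x_4,\ldots,x_n)^{c-1},$$
the defining ideal of at most three general fat points in $\mathbb{P}^{n-1}$ (when $b=1$ the middle factor is $R$; the hypothesis $a \neq c$ guarantees $c-1 \geq 1$). By Francisco's theorem this ideal is componentwise linear, and since $x_1$ is a nonzerodivisor, the graded isomorphism $x_1 \cdot (I:x_1) \cong (I:x_1)(-1)$ transports componentwise linearity to $x_1 \cdot (I:x_1)$. The core identity then to establish is
$$J = x_1 \cdot (I : x_1).$$
The inclusion $J \subseteq x_1(I:x_1)$ is immediate. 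For the converse, given $g \in G((I:x_1))$, pick $m \in G(I)$ dividing $x_1 g$: if $x_1 \mid m$, then $x_1 g \in (m) \subseteq J$; otherwise $m \mid g$, and since $m \in I \subseteq (I:x_1)$, minimality of $g$ in $(I:x_1)$ forces $m = g$, i.e., $g \in G(I) \cap G((I:x_1))$.

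The assumption $a \neq c$ is used precisely to rule out this last possibility. First, $x_1 \nmid g$ (otherwise $g \in I$ gives $g/x_1 \in (I:x_1)$ via $x_1 \cdot (g/x_1) = g \in I$, contradicting minimality). Then, for every $i \geq 2$ with $x_i \mid g$, minimality of $g$ in $(I:x_1)$ demands $x_1 g / x_i \notin I$. A direct check on exponent vectors shows that the second and third defining inequalities of $I$ are automatically satisfied by $x_1 g / x_i$ (the extra $+1$ contributed by $\alpha_1$ compensates the removal of $x_i$), so the first inequality $\sum_{j\geq 2}\alpha_j \geq a$ must fail, forcing $\deg g \leq a$. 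Combined with $\deg g \geq b \geq a$ this pins down $\deg g = a = b$; propagating the equalities through the third inequality forces $\alpha_2(g) = 0$ and $\sum_{j\geq 4}\alpha_j(g) = a = c$, contradicting $a \neq c$. This combinatorial bookkeeping---tracking which of the three fat-points inequalities is tight at each variable of $g$---is the main obstacle; once it is in place, Theorem~\ref{my} immediately delivers the $x_1$-splitting.
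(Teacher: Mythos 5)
Your proof is correct and follows the same overall strategy as the paper: take the $x_1$-splitting $I=J+K$, identify $J$ (resp.\ $K$) as $x_1 I_{n,a,b-1,c-1}$ (resp.\ the two-fat-points ideal $(x_3,\dots,x_n)^b\cap(x_2,x_4,\dots,x_n)^c$), invoke Francisco's componentwise-linearity theorem, and conclude via Theorem~\ref{my}. Where you diverge is in the technical verification that the $x_1$-splitting pieces really are these ideals. The paper defines $J:=x_1 I_{n,a,b-1,c-1}$ and $K$ explicitly, then proves $G(I)=G(J)\sqcup G(K)$ through an auxiliary inclusion $K\subseteq \M I_{n,a,b-1,c-1}$ (which is where the inequality $a\leq c-1$, i.e.\ $a\neq c$, enters). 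You instead compute $(I:x_1)$ by distributing the colon over the intersection and prove $J=x_1(I:x_1)$ directly, ruling out the bad case $g\in G((I:x_1))\cap G(I)$ with $x_1\nmid g$ by a degree count: the second and third defining inequalities are stable under replacing $g$ by $x_1g/x_i$, so minimality of $g$ in $(I:x_1)$ forces the first to fail, giving $\deg g\leq a$, while $g\in I$ forces $\deg g\geq c$. Both routes land the same combinatorial fact and use the hypothesis $a\neq c$ at the analogous step; yours is marginally more streamlined in isolating exactly where $a\neq c$ is needed. One small cleanup in your last paragraph: the third defining inequality already gives $\deg g\geq c$ directly, so $\deg g\leq a$ immediately contradicts $a<c$; the intermediate claims $\deg g=a=b$ and $\alpha_2(g)=0$ are true (the latter also uses the second inequality) but detours that are not needed.
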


\begin{proof}
Let $J=x_1 I_{n,a,b-1,c-1}$ and $K=(x_3,...,x_n)^b \cap (x_2,x_4,...,x_n)^c$. We show that $I=J+K.$ 

Let $f \in G(I).$ Assume $f \in (x_1).$ Then $f \in (x_1) \cap I.$ Since $ (x_1) \cap (x_1,x_3,...,x_n)^b=x_1(x_1,x_3,...,x_n)^{b-1}$ and $(x_1) \cap (x_1,x_2,x_4,...,x_n)^c=x_1(x_1,x_2,x_4,...,x_n)^{c-1}$, it follows $$(x_1) \cap I=x_1(x_2,...,x_n)^a \cap x_1(x_1,x_3,...,x_n)^{b-1} \cap x_1(x_1,x_2,x_4,...,x_n)^{c-1}=x_1I_{n,a,b-1,c-1}=J.$$ 

Let $f \notin (x_1),$ then $f \in (x_2,..,x_n)^a \cap (x_3,..,x_n)^b \cap (x_2,x_4,..,x_n)^c$. Since $(x_3,..,x_n) \subseteq (x_2,x_3,..,x_n)$ and $a \leq b,$ then $f \in (x_3,..,x_n)^b \cap (x_2,x_4,..,x_n)^c=K$. For the other inclusion, note that $J=(x_1) \cap I \subseteq I$ and $K=(x_3,..,x_n)^b \cap (x_2,x_4,..,x_n)^c=(x_2,..,x_n)^a \cap (x_3,..,x_n)^b \cap (x_2,x_4,..,x_n)^c \subseteq I$. 

We prove now that $G(I)=G(J) \cup G(K)$. Let $g \in G(I).$ Then $g \in J$ or $g \in K$. Assume $g \in J.$ If $g \notin G(J)$, there would be $m \in G(J)$ such that $g \in (m).$ Since $J \subseteq I$ and $g \in G(I)$, this is a contradiction. The proof for $K$ is the same, then the first inclusion is clear.

For the reverse inclusion, we first prove that $K \subseteq \M I_{n,a,b-1,c-1}.$ Let $h \in G(K)$ be a monomial. Assume first that there is a variable $x_i,$ with $x_i|h$ and $4 \leq i \leq n$. Then $$h \in x_i[(x_3,..,x_n)^{b-1} \cap (x_2,x_4,..,x_n)^{c-1}] \subseteq \M[(x_3,..,x_n)^{b-1} \cap (x_2,x_4,..,x_n)^{c-1}].$$ Since $a \leq c-1$ and $(x_2,x_4,..,x_n) \subseteq (x_2,..,x_n)$ hence $$h \in \M[(x_2,..,x_n)^a \cap (x_3,..,x_n)^{b-1} \cap (x_2,x_4,..,x_n)^{c-1}] \subseteq \M I_{n,a,b-1,c-1}.$$ Otherwise $h=x_2^cx_3^b=x_2x_3(x_2^{b-1}x_3^{c-1}) \in \M I_{n,a,b-1,c-1}.$

Let $g \in G(J)$. By definition of $J$, there is $s \in G(I_{n,a,b-1,c-1})$ such that $g=x_1s.$ If $g \notin G(I)$, then there are $h \in G(I)$ and a monomial $r \in R$ such that $g=x_1s=hr$. Since $G(I) \subseteq G(J) \cup G(K)$ and $g \in G(J),$ then $h \in G(K).$ Hence $x_1|r$ and $s=r_1h,$ where $r_1=\frac{r}{x_1}.$ This is a contradiction, because $h \in \M I_{n,a,b-1,c-1}$. 

Let $g \in G(K)$. If $g \notin G(I)$, then there is $m \in G(I)$ such that $g \in (m)$. Since $g \in G(K)$, then $m \in J=(x_1) \cap I$. Hence $x_1|g,$ a contradiction. 

Clearly one has $G(J) \cap G(K)=\emptyset$. By \cite[Theorem 4.6]{fran}, $J$ and $K$ are componentwise linear. In view of Theorem \ref{my}, we conclude that $I=J+K$ is a Betti splitting of $I$.
\end{proof}

Notice that, in general, the splitting of Theorem \ref{splitting} is not a particular case of \cite[Theorem 2.3]{ur} (see for instance the case $n=4$, $a=b=1$, $c=2$).

In the next corollary we compute explicitly the graded Betti numbers of $I_{n,a,b,c}$ in the case $a+b \leq c$ by a recursive procedure.

\begin{corollary}\label{finalcor}
Let $n \in \N$, $n \geq 4$. Let $X=\{(P_1,a),(P_2,b),(P_3,c)\}$ be the $0$-dimensional scheme consisting of three general fat points in $\mathbb{P}^{n-1}$, with $1 \leq a \leq b \leq c$ and $I=I_{n,a,b,c}$. Assume $a+b \leq c$. Then $$\beta_{i,i+c}(I)=\beta_{i,i+c-b}(I_{n,a,c-b})+\sum_{r=0}^{b-1}[\beta_{i,i+c-r}(I_{n-1,b-r,c-r})+\beta_{i-1,i+c-r-1}(I_{n-1,b-r,c-r})].$$ $$\beta_{i,j}(I)=\begin{cases} \binom{n-2}{i}[\binom{n-3+c+a-j+i}{n-3}+\binom{n-3+c+b-j+i}{n-3}] & \text{   if  } c+1+i \leq j \leq a+c+i. \\ \binom{n-2}{i}\binom{n-3+c+b-j+i}{n-3} & \text{   if  } a+c+1+i \leq j \leq b+c+i. \\ 0 & \text{   if  } j \geq b+c+1+i. \end{cases}$$
\end{corollary}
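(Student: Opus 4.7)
The strategy is to apply Theorem~\ref{splitting} iteratively $b$ times, reducing $I_{n,a,b,c}$ to the two-fat-point ideal $I_{n,a,c-b}$, and then to invoke Valla's closed-form expression \cite[Corollary~3.5]{tito} for the graded Betti numbers of two general fat points.

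To derive the master recursion, combine the Betti splitting of Theorem~\ref{splitting} (so $I_{n,a,b,c}=J+K$ with $J=x_1 I_{n,a,b-1,c-1}=(x_1)\cap I_{n,a,b,c}$ and $K=(x_3,\dots,x_n)^b\cap(x_2,x_4,\dots,x_n)^c$) with the observations that $J\cap K=(x_1)\cap K=x_1K$ (no generator of $K$ involves $x_1$) and that $K$ is, up to the inessential adjunction of the variable $x_1$, the defining ideal of two general fat points in $\mathbb{P}^{n-2}$ of multiplicities $b$ and $c$, so $\beta_{i,j}(K)=\beta_{i,j}(I_{n-1,b,c})$. This yields
\[
\beta_{i,j}(I_{n,a,b,c})=\beta_{i,j-1}(I_{n,a,b-1,c-1})+\beta_{i,j}(I_{n-1,b,c})+\beta_{i-1,j-1}(I_{n-1,b,c}).
\]
I then iterate the recursion $b$ times, at step $r$ applying it to $I_{n,a,b-r,c-r}$ for $r=0,\dots,b-1$. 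The hypothesis $c-r\neq a$ of Theorem~\ref{splitting} holds because $a+b\le c$ forces $c-r>a$ for all $r\le b-1$; and although the ordering $a\le b-r$ may fail once $r>b-a$, the only step in the proof of Theorem~\ref{splitting} that invokes that ordering is the containment $K_r=(x_3,\dots,x_n)^{b-r}\cap(x_2,x_4,\dots,x_n)^{c-r}\subseteq(x_2,\dots,x_n)^a$, and this requires only $a\le c-r$, because every minimal generator of $K_r$ has degree at least $c-r$. Componentwise linearity of the two pieces at each step is guaranteed by \cite[Theorem~4.6]{fran}, so Theorem~\ref{my} supplies the Betti splitting. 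Telescoping and using the convention $I_{n,a,0,c-b}=I_{n,a,c-b}$ yields the key identity
\[
\beta_{i,j}(I_{n,a,b,c})=\beta_{i,j-b}(I_{n,a,c-b})+\sum_{r=0}^{b-1}\bigl[\beta_{i,j-r}(I_{n-1,b-r,c-r})+\beta_{i-1,j-r-1}(I_{n-1,b-r,c-r})\bigr].
\]

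Setting $j=i+c$ in the displayed identity (so $j-b=i+(c-b)$, $j-r=i+(c-r)$, $j-r-1=(i-1)+(c-r)$) gives the first displayed formula of the corollary. For the second formula, substitute Valla's explicit expression \cite[Corollary~3.5]{tito} for the Betti numbers of two fat points into the identity. The prefactor $\binom{n-3}{i}$ appearing in Valla's formula for an ideal in $\K[x_2,\dots,x_n]$ combines with the shifted factor $\binom{n-3}{i-1}$ via Pascal's identity $\binom{n-3}{i-1}+\binom{n-3}{i}=\binom{n-2}{i}$, producing the $\binom{n-2}{i}$ prefactor in the corollary; the sum over $r$ then collapses to the two-binomial expression on the range $c+1+i\le j\le a+c+i$ and to the one-binomial expression on the range $a+c+1+i\le j\le b+c+i$, while the vanishing on $j\ge b+c+1+i$ follows because each Valla Betti number on the right-hand side is zero beyond its regularity. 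The main technical obstacle is this final combinatorial simplification---tracking the three ranges of $j$ and recognizing the Pascal-type collapses after substituting Valla's binomials---whereas the iteration itself and the verification of the hypotheses of Theorem~\ref{splitting} at each step are routine.
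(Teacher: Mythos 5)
Your proposal is correct and follows essentially the same route as the paper: apply Theorem~\ref{splitting} $b$ times by $x_1$, derive the telescoping identity, and substitute Valla's formula for the two-fat-point ideals. You are actually more careful than the paper on one point: the ordering $a\le b-r$ may indeed fail partway through the iteration, and you rightly observe that the proof of Theorem~\ref{splitting} only genuinely needs $a<c-r$ (which holds for all $r\le b-1$ under $a+b\le c$), a subtlety the paper leaves implicit. One small omission in your sketch of the final simplification: besides Pascal's identity $\binom{n-3}{i-1}+\binom{n-3}{i}=\binom{n-2}{i}$, the paper also invokes the hockey-stick identity $\sum_{r=h}^{s}\binom{r}{c}=\binom{s+1}{c+1}-\binom{h}{c+1}$ to collapse the sum over $r$, and both are needed to reach the stated closed form.
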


\begin{proof}
Since $a+b \leq c$, the assumptions of Theorem \ref{splitting} are fullfilled, then $I_{n,a,b,c}$ admits $x_1$-splitting. Let $J$ and $K$ be as in the proof of Theorem \ref{splitting}. Clearly $J \cap K=x_1K$. Note that $\beta_{i,j}(J)=\beta_{i,j-1}(I_{n,a,b-1,c-1})$ and $\beta_{i-1,j}(J \cap K)=\beta_{i-1,j-1}(K),$ for each $i,j \geq 0$. We remark that, after a relabeling of the variables, $K$ is the ideal of two general fat points in $\mathbb{P}^{n-2}$, i.e. $K=I_{n-1,b,c}$. Then $$\beta_{i,j}(I)=\beta_{i,j-1}(I_{n,a,b-1,c-1})+\beta_{i,j}(I_{n-1,b,c})+\beta_{i-1,j-1}(I_{n-1,b,c}).$$ Since $c-r \neq a$, for $0 \leq r \leq c-a-1$ and $b-1 \leq c-a-1$, we can apply the same argument recursively to $I_{n,a,b-r,c-r}$, for $0 \leq r \leq b-1$, to get \begin{equation}\label{rec} \beta_{i,j}(I)=\beta_{i,j-b}(I_{n,a,c-b})+\sum_{r=0}^{b-1}[\beta_{i,j-r}(I_{n-1,b-r,c-r})+\beta_{i-1,j-1-r}(I_{n-1,b-r,c-r})].\end{equation} The result follows by \cite[Corollary 3.5]{tito} and by the relation $\sum_{r=h}^s\binom{r}{c}=\binom{s+1}{c+1}-\binom{h}{c+1}$.
\end{proof}

Theorem \ref{splitting} allows us to apply a recursive procedure for computing the Betti numbers of $I= I_{n,a,b,c}$ also in the case $a+b>c$ and $c \neq a. $ This formula has the limit that the Betti numbers of $I_{n,k}$, $k \in \N$, could be involved. These ideals are studied in \cite{fat}. An explicit formula for the graded Betti numbers of $I_{n,k}$ is given only for $k=2$ \cite[Proposition 3.2]{fran} and $k=3$ \cite[Proposition 3.3]{fran}. 

\begin{corollary}\label{finalcor2}
Let $n \in \N$, $n \geq 4$. Let $X=\{(P_1,a),(P_2,b),(P_3,c)\}$ be the $0$-dimensional scheme consisting of three general fat points in $\mathbb{P}^{n-1}$, with $1 \leq a \leq b \leq c$, $a+b>c$ and $c \neq a$. Let $I=I_{n,a,b,c}$ be the defining ideal of $X$. Set $k=a+b-c$, \begin{center}$B_{n,a,b,c}^{i,j}:=\binom{n-3+c+a-j+i}{n-3}+\binom{n-3+c+b-j+i}{n-3}-2\binom{n-3+a+b-j+i}{n-3},$ for $i,j \in \N,$\end{center} and \begin{center}$\gamma_{s,t}^{n,i}:=\beta_{i,i+t}(I_{n-1,s,t})+\beta_{i-1,i+t-1}(I_{n-1,s,t}),$ for $s,t \in \N$.\end{center} Then
$$
\beta_{i,i+c}(I)=\beta_{i,i+k}(I_{n,k}) +\sum_{r=0}^{c-a-1}\gamma_{b-r,c-r}^{n,i}+\sum_{r=0}^{c-b-1}\gamma_{a-r,a-r}^{n,i}.
$$
$$\beta_{i,j}(I)=\begin{cases}
\beta_{i,j+k-c}(I_{n,k})+\binom{n-2}{i}B_{n,a,b,c}^{i,j} & \text{ if  } {c+1+i \leq j \leq a+b+i.}\\
\binom{n-2}{i}[\binom{n-3+c+a-j+i}{n-3}+\binom{n-3+c+b-j+i}{n-3}] & \text{ if  } {a+b+1+i \leq j \leq a+c+i.}\\
\binom{n-2}{i}\binom{n-3+c+b-j+i}{n-3} & \text{ if  } {a+c+1+i \leq j \leq b+c+i.}\\
0 & \text{   if  } {j \geq c+b+1+i.}\end{cases}$$
\end{corollary}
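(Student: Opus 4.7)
The plan is to extend the iterative approach of Corollary \ref{finalcor}, pushing the recursion through two phases so that it terminates at the symmetric three-fat-point ideal $I_{n,k}$ rather than at a two-fat-point ideal.

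\emph{Phase 1.} Apply Theorem \ref{splitting} to $I = I_{n,a,b,c}$ to get
\[
\beta_{i,j}(I) = \beta_{i,j-1}(I_{n,a,b-1,c-1}) + \beta_{i,j}(I_{n-1,b,c}) + \beta_{i-1,j-1}(I_{n-1,b,c}),
\]
and iterate this $c-a$ times, at the $r$-th step applying the recursion to $I_{n,a,b-r,c-r}$. The splitting hypothesis $c-r \ne a$ holds throughout $0 \le r \le c-a-1$. After $c-a$ iterations, the accumulated contributions form the first sum in the statement, and the remaining three-point term is $\beta_{i,j-(c-a)}(I_{n,a,k,a})$. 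Since the Betti numbers depend only on the multiset of multiplicities, this equals $\beta_{i,j-(c-a)}(I_{n,k,a,a})$ in sorted form (using $k = a+b-c \le a$).

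\emph{Phase 2.} In the sub-case $b=c$ one has $k=a$ and $I_{n,k,a,a} = I_{n,k}$, so Phase 2 is vacuous (the second sum is empty, with upper index $-1$). Otherwise $b<c$, hence $k<a$, and Theorem \ref{splitting} applies to $I_{n,k,a,a}$ since $a \ne k$; iterate $c-b$ times, the hypothesis $a-s \ne k$ being valid for $0 \le s \le c-b-1$. After $c-b$ iterations the three-point ideal collapses to $I_{n,k,k,k} = I_{n,k}$, and the contributions form the second sum. Specialising at $j = i+c$: the composed index shift is $(c-a)+(c-b) = 2c-a-b$, so the base degree becomes $j - (2c-a-b) = i+k$, giving the base term $\beta_{i,i+k}(I_{n,k})$. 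In each summand the tracked degree aligns with the appropriate $i+t$, so the contributions take the $\gamma^{n,i}$-form as stated. This proves the first displayed formula.

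\emph{Case-by-case formula for general $j$.} The same two-phase recursion, without specialising $j$, expresses $\beta_{i,j}(I)$ as $\beta_{i,j-(2c-a-b)}(I_{n,k})$ plus the two $\gamma$-sums evaluated at general $j$. Each $\gamma$-contribution is then made explicit by Valla's binomial-coefficient formula \cite[Corollary 3.5]{tito} for two-fat-point ideals, and the telescoping identity $\sum_{r=h}^{s}\binom{r}{c}=\binom{s+1}{c+1}-\binom{h}{c+1}$ (as used in Corollary \ref{finalcor}) collapses the partial sums. The four sub-ranges of $j$ correspond to which $\gamma$-terms survive, governed by the regularity $s+t-1$ of $I_{n-1,s,t}$; in particular, the term $-2\binom{n-3+a+b-j+i}{n-3}$ in $B_{n,a,b,c}^{i,j}$ records the overlap between Phase-1 and Phase-2 contributions both surviving in the bottom range $c+1+i \le j \le a+b+i$. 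The Betti-splitting iteration itself is a direct analogue of Corollary \ref{finalcor}; the main obstacle is the binomial-coefficient bookkeeping needed to match $B_{n,a,b,c}^{i,j}$ and to verify the vanishing ranges, together with careful tracking of the splitting hypotheses through the implicit reorderings of multiplicities in intermediate states.
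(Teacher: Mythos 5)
Your proof is correct and follows essentially the same two-phase recursion as the paper: iterate the $x_1$-splitting of Theorem \ref{splitting} until reaching $I_{n,a,k,a}$ after $c-a$ steps, reorder multiplicities to $(k,a,a)$ so that the theorem applies again (via an $x_2$-splitting), iterate a further $c-b$ steps to reach $I_{n,k}$, and then invoke \cite[Corollary 3.5]{tito} and the telescoping binomial identity for the explicit formulas. Like the paper, you leave the final binomial-coefficient bookkeeping for the case distinction schematic, so the two arguments are at the same level of detail.
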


\begin{proof}
Note that $c<a+b \leq 2c$ and $c \geq 2$. By Theorem \ref{splitting}, $I_{n,a,b,c}$ admits $x_1$-splitting. The main difference with Corollary \ref{finalcor} is that $c-r \neq a$ for $0 \leq r \leq c-a$ but $c-a<b$. Then, following the proof of Corollary \ref{finalcor} and using Equation (\ref{rec}) we get $$\beta_{i,j}(I)=\beta_{i,j+a-c}(I_{n,a,a+b-c,a})+\sum_{r=0}^{c-a-1}[\beta_{i,j-r}(I_{n-1,b-r,c-r})+\beta_{i-1,j-1-r}(I_{n-1,b-r,c-r})].$$ We focus our attention only on the first term $\beta_{i,j+a-c}(I_{n,a,a+b-c,a})$ of the equation. Now one has $a+b-c \leq a$. If $b=c$ the claim follows. If $b<c$, then $a+b-c \neq a$ and the assumptions of Theorem \ref{splitting} are satisfied. Since we consider the new order of the ideals of the intersection given by multiplicities, then $I_{n,a,a+b-c,a}$ admits $x_2$-splitting. One has $a-r \neq a+b-c$ for $0 \leq r \leq c-b-1$. Using Equation (\ref{rec}) and $k=a+b-c$, we get $$\beta_{i,j+a-c}(I_{n,k,a,a})=\beta_{i,j+k-c}(I_{n,k})+\sum_{r=0}^{c-b-1}[\beta_{i,j+a-c-r}(I_{n-1,a-r,a-r})+\beta_{i-1,j+a-c-r-1}(I_{n-1,a-r,a-r})].$$ By \cite[Proposition 3.1]{fran}, the ideal $I_{n,k}$ is generated in degree at most $2k$. The statement follows \cite[Corollary 3.5]{tito}.
\end{proof}

We prove that in Theorem \ref{splitting} the assumption $c \neq a$ is essential.

\begin{ex}\label{nosplit}
\em Consider three double points in $\mathbb{P}^3$. The defining ideal $I=I_{4,2}$ admits the decomposition $I=J+K$ of Theorem \ref{splitting}, where $J=x_1I_{4,2,1,1}$ and $K=(x_3,x_4)^2 \cap (x_2,x_4)^2$. Unfortunately $G(I) \neq G(J) \cup G(K)$, since we have $x_1x_4^2 \in G(J)$ that is {\em not} a minimal generator of $I$. The same problem arise if we choose $x_2$ or $x_3$. 

It can be proved \cite{bolo} that, in general, $I_{n,a}$ admits $x_n$-splitting. More precisely, we have $I_{n,a}=x_nI_{n,a-1}+I_{n-1,a},$ with $G(I_{n,a})=G(x_nI_{n,a-1}) \cup G(I_{n-1,a})$ and $G(x_nI_{n,a-1}) \cap G(I_{n-1,a})=\emptyset$, but we are not able to take advantage from this decomposition, since the resolutions of both $I_{n,a-1}$ and $I_{n-1,a}$ are in general unknown. 
\end{ex}

\bigskip
\noindent
Davide Bolognini\\
Dipartimento di Matematica\\
Universit{\`a} di Genova\\
Via Dodecaneso 35, 16146 Genova, Italy\\
e-mail: {\tt bolognin@dima.unige.it}; {\tt  davide.bolognini@yahoo.it}
\end{document}